\documentclass[12pt,a4paper,reqno]{amsart}
\usepackage[utf8]{inputenc}
\usepackage{amsmath,amssymb,amsthm,amsfonts}
\usepackage{geometry}
\usepackage{hyperref}
\newtheorem{theorem}{Theorem}[section]
\newtheorem{lemma}[theorem]{Lemma}

\theoremstyle{definition}

\newtheorem{conjecture}[theorem]{Conjecture}
\theoremstyle{remark}

\makeatletter
\@namedef{subjclassname@2020}{%
  \textup{2020} Mathematics Subject Classification}
\makeatother
\newcommand{\F}{\mathbb{F}}

\title{\textbf{A Conjecture on Circular Permutations over Finite Fields}}
\author[X.-Q. Luo]{Xin-Qi Luo}
\address{School of Sciences, Changzhou Institute of Technology, Changzhou 213032, China}
\email{luoxq@czust.edu.cn}

\author[Y.-F. She]{Yue-Feng She}
\address{Department of Applied Mathematics, Nanjing Forestry University, Nanjing 210037, China}
\email{she.math@njfu.edu.cn}

\date{}
\keywords{Hamiltonicity, finite field, primitive element}
\subjclass[2020]{Primary 05C45; Secondary 11A07}
\thanks{The second author was supported by the Tian Yuan Mathematical Foundation (Grant No. 12526613).}

\begin{document}

\maketitle

\begin{abstract}
Let \(\mathbb{F}_q\) be a finite field with \(q>7\). Zhi-Wei Sun conjectured that for every \(a_0\in\mathbb{F}_q\), there is a circular permutation \((a_1,\dots,a_{q-1})\) of non-zero elements of \(\mathbb{F}_q\) such that \(a_0+a_i a_{i+1}\) is primitive for \(1\le i\le q-1\), where \(a_q=a_1\). In this paper, we confirm this conjecture for \(q>18\,888\,871\).  For \(a_0=0\), the conclusion holds for all \(q>4\). For \(a_0\ne0\), we apply the Chv\'atal--Erd\H{o}s theorem to obtain a Hamilton cycle.
\end{abstract}

\section{Introduction}

Let $\mathbb{F}_q$ be a finite field of order $q=p^f$, with $p$ prime and $f$ a positive integer. We denote the multiplicative group of $\mathbb{F}_q$ by $\mathbb{F}_q^*.$ The generators of $\mathbb{F}_q^*$ are called \emph{primitive elements}. Permutations satisfying arithmetic restrictions and primitive elements in finite fields lead to many interesting problems.
In 1982, Filz \cite{filz1982} conjectured that for every even $n$, there is a permutation $\tau$ of $\{1,\ldots,n\}$ such that
$$\tau(1)+\tau(2),\ \tau(2)+\tau(3),\ \dots,\ \tau(n)+\tau(1)$$
are all prime. This conjecture appears in Guy's book \cite{guy2004}. For recent progress on this problem, one may refer to \cite{chen2021, shang2025,she}.





Primitive elements and permutations lead to several questions.
In 2013, Sun \cite{sun2013} proposed a series of conjectures on permutations of finite fields.
For example, Sun and Hou \cite[Conjecture 3.5(i)]{sun2013} conjectured that there is a circular permutation of the elements of $\mathbb{F}_q$ for $q>7$ such that every adjacent sum is primitive.
This conjecture has recently been confirmed by the second author for $q>18\,888\,871$ \cite{she}.

In this paper, we study another conjecture of Sun \cite[Conjecture 3.8]{sun2013}.

\begin{conjecture} \label{conj:main}
Let $\mathbb{F}_q$ be a finite field with $q > 7$. For any given element $a_0 \in \mathbb{F}_q$, there exists a circular permutation $(a_1, \dots, a_{q-1})$ of all elements of $\mathbb{F}_q^*$ such that
$$
a_0 + a_1 a_2, \quad a_0 + a_2 a_3, \quad \dots, \quad a_0 + a_{q-2} a_{q-1}, \quad a_0 + a_{q-1} a_1
$$
are primitive elements of $\mathbb{F}_q$.
\end{conjecture}

For a graph $G$, its vertex set and edge set are denoted by $V(G)$ and $E(G)$, respectively. Given $a_0\in\mathbb{F}_q$, we define a graph $G_{q,a_0}$ with $V(G_{q,a_0})=\mathbb{F}_q^*$ by
$$
xy\in E(G_{q,a_0})\Longleftrightarrow x\neq y \text{ and } a_0+xy \text{ is primitive.}
$$
The desired circular permutation is exactly a Hamilton cycle in $G_{q,a_0}$.

\begin{theorem}
\label{thm:main}
For any prime power $q>18\,888\,871$ and any $a_0\in\mathbb{F}_q$, $G_{q,a_0}$ is Hamiltonian. When $a_0=0$, the conclusion holds for $q>4$.
\end{theorem}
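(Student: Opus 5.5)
The proof splits into the two cases announced in the abstract.

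\emph{The case $a_0=0$.} Here the adjacent values are products $a_ia_{i+1}$, and I would build the cycle explicitly. Fix a primitive element $g$ and write $\mathbb{F}_q^*=\{g^0,g^1,\dots,g^{q-2}\}$. The product $g^ig^j=g^{i+j}$ is primitive exactly when $\gcd(i+j,q-1)=1$. So the task becomes: find a circular arrangement of $\mathbb{Z}/(q-1)$ in which every sum of neighbours is a unit mod $q-1$ and neighbours are distinct.

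\begin{itemize}
\item If $q$ is even, then $q-1$ is odd. Use the cycle $0,1,2,\dots,q-2$: consecutive sums are $2i+1$, together with $q-2$ for the closing pair. Coprimality may fail, so I would instead use steps by a fixed unit. Take the ordering $x_k=k\cdot u$, giving sums $(2k+1)u$. This still needs $2k+1$ to be a unit for all $k$, which fails in general. The better choice is the ordering $x_k=(-1)^k k$ or a similar arrangement with sums $\pm1$.
\item Concretely, pair $i$ with $1-i$ and $-i$ alternately: the sequence $0,1,-1,2,-2,\dots$ has neighbour sums $1,0,1,0,\dots$. A sum of $0$ is bad. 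The fix is to shift by a unit $c$, so that sums become $1+c$ and $c$. Both must be units mod $q-1$, and such consecutive units exist whenever $q-1$ is odd, since $c=1$ gives sums $1$ and $2$.
\item When $q-1$ is even, no two consecutive integers are both units. Instead I would choose the sequence $x, s-x, x', \dots$ with alternating sums $s$ and $t$, both units. Then the sequence is $x_{2k}=x_0+k(t-s)$ with odd-indexed terms $s-x_{2k}$. One wants $d=t-s$ to generate the right subgroup so that the whole set is covered.
\end{itemize}

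I would work this out with $s=1$, $t=-1$, so $d=-2$, and handle the parity and closing-up carefully. Small $q>4$ that the construction does not cover I would check by hand.

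\emph{The case $a_0\neq 0$.} I would use the Chvátal--Erdős theorem: if $\kappa(G)\ge\alpha(G)$, then $G$ is Hamiltonian. Both quantities are estimated by character sums. The indicator of primitivity is written in Vinogradov's form
$$\frac{\varphi(q-1)}{q-1}\sum_{d\mid q-1}\frac{\mu(d)}{\varphi(d)}\sum_{\operatorname{ord}\chi=d}\chi(\cdot).$$

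\emph{Step 1: bound $\alpha$.} For sets $A,B\subseteq\mathbb{F}_q^*$, the number of pairs $(x,y)\in A\times B$ with $a_0+xy$ primitive is
$$\frac{\varphi(q-1)}{q-1}|A||B|+O\!\left(\frac{\varphi(q-1)}{q-1}\,2^{\omega(q-1)}\sqrt{q|A||B|}\right).$$
This follows from the standard bilinear bound $\left|\sum_{x\in A,\,y\in B}\chi(a_0+xy)\right|\le\sqrt{q|A||B|}$, valid for $a_0\neq0$ and nontrivial $\chi$; the bound is obtained by Cauchy--Schwarz and orthogonality. Applying this with $A=B$ an independent set, and subtracting the $O(|A|)$ diagonal and repeated pairs, gives $\alpha(G)\ll 2^{\omega(q-1)}\sqrt q$.

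\emph{Step 2: bound $\kappa$.} Suppose a separator $S$ splits the graph into $A$ and $B$ with no edges between them. The bilinear estimate then forces $\min(|A|,|B|)\ll 4^{\omega(q-1)}q/\theta^2$, where $\theta=\varphi(q-1)/(q-1)$. Combined with the degree lower bound $\deg x\ge\theta q-2^{\omega(q-1)}\sqrt q-2$, this shows $|S|$ exceeds the bound on $\alpha$. The comparison works because a small side $A$ must have all its neighbours inside $A\cup S$.

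\emph{Step 3: numerics.} For large $q$ the inequality $\kappa\ge\alpha$ holds, using $2^{\omega(n)}\ll n^{\epsilon}$ with explicit constants. For the remaining range I would replace $2^{\omega}$ by a sieve with a small set of primes and then check computationally down to $18\,888\,871$. I expect the main obstacle to be making the constants explicit enough to reach that threshold.
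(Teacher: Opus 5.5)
Your plan for $a_0\neq0$ follows the paper's route (Chv\'atal--Erd\H{o}s plus the bilinear discrepancy bound). The case $a_0=0$, however, has a genuine gap when $q=2^f$, i.e.\ when $n=q-1$ is odd.

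Every construction you propose is a zigzag whose neighbour sums alternate between two values $s,t$: sums $c,c+1$ with $c=1$, or $s=1$, $t=-1$. A strict alternation can only close up around a cycle of even length. Here the cycle has odd length $n$, so some edge must carry a third sum. This affects every $q=2^f$, not finitely many small $q$ that you could check by hand.

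The obvious repair also fails. Suppose $x_0,\dots,x_{2h}$ (with $n=2h+1$) alternate sums $s,t$. Then $x_{2k}=x_0+kd$ and $x_{2k+1}=s-x_0-kd$ with $d=t-s$. Distinctness of the $n$ terms forces $d$ to be a unit and $2x_0\equiv t$. The closing sum is then $x_{2h}+x_0\equiv 2^{-1}(s+t)$. So you would need $s$, $t$, $t-s$, $s+t$ all coprime to $n$, which is impossible when $3\mid n$ (for example every $q=4^k$). For your concrete choice $s=1$, $t=-1$, the closing sum is $0$.

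The missing idea is a cycle using more than two sums. The paper takes
\[
(n-1,\,0,\,n-2,\,1,\,h,\,h-1,\,h+1,\,h-2,\,h+2,\,\dots,\,2,\,n-3).
\]
This consists of a zigzag with sums $-1,-2$ around the middle value $h$, covering $\{2,\dots,n-3\}$, together with the block $n-1,0,n-2,1$ (sums $-1,-2,-1$). The two pieces are joined by edges with sums $h+1\equiv 2^{-1}$ and $-4$. All of $-1,-2,-4,2^{-1}$ are units modulo odd $n$. Your even-$n$ construction ($s=1$, $t=-1$, $x_0=0$) is correct and is the paper's up to $x\mapsto -x$.

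For $a_0\neq0$, Step 2 as written does not work. The bound $\min(|A|,|B|)\ll 4^{\omega(q-1)}q/\theta^2$ exceeds $q$, so $\kappa\ge\delta-|A|+1$ yields nothing. What $E(A,B)=0$ and the discrepancy bound actually give (keeping the factor $\theta$ on the error, as in your Step 1) is $|A||B|\le 4^{\omega(q-1)}q$. Hence $|A|\le 2^{\omega(q-1)}\sqrt q$ when $A$ is a smallest component of $G-S$.

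The degree bound can also be made exact. Since $y\mapsto a_0+xy$ maps $\mathbb{F}_q^*$ bijectively onto $\mathbb{F}_q\setminus\{a_0\}$, you get $\delta(G)\ge\varphi(q-1)-2$.

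Your Step 1, with $A=B=I$ and $E(I,I)\le|I|$, gives $\alpha\le 2^{\omega(q-1)}\sqrt q+(q-1)/\varphi(q-1)$. This is sharper than the paper's bound $2\cdot 2^{\omega(q-1)}\sqrt q+1$, which comes from splitting $I$ into halves.

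With these corrections, everything reduces to an explicit inequality such as $\varphi(q-1)\ge 3\cdot 2^{\omega(q-1)}\sqrt q+2$ (a weaker one suffices with your $\alpha$ bound). The paper verifies this by cases on $r=\omega(q-1)$, using $\varphi(n)/\sqrt n\ge\varphi(P_r)/\sqrt{P_r}$ with $P_r$ the product of the first $r$ primes. No sieve is needed.
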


\section{Preliminaries}
For a positive integer $n$, let $\omega(n)$, $\varphi(n)$ and $\mu(n)$ denote the number of distinct prime divisors of $n$, Euler's totient function and the M\"obius function, respectively. Define $$ W(n)=2^{\omega(n)}$$ to be the number of squarefree divisors of $n$.
We let $\widehat{\mathbb{F}_q^*}$ denote the group of multiplicative characters of $\mathbb{F}_q^*$. For any $\chi\in\widehat{\mathbb{F}_q^*}$, we extend $\chi$ to $\mathbb{F}_q$ by $\chi(0)=0$.
 The characteristic function of primitive elements in $\F_q$ is given by
\begin{equation}\label{eq:indicator}
\mathbf{1}_{\text{prim}}(x) = \frac{\varphi(q-1)}{q-1} \sum_{d \mid q-1} \frac{\mu(d)}{\varphi(d)} \sum_{\substack{\chi \in \widehat{\F}_q^* \\ \operatorname{ord}(\chi)=d}} \chi(x).
\end{equation}

 For any graph $G$, we write $\alpha(G)$ for its independence number, $\delta(G)$ for its minimum degree, and $\kappa(G)$ for its vertex connectivity. We will use the following theorem given by Chv\'atal and Erd\H{o}s \cite{chvatal1972}.

\begin{theorem}\label{lem:chvatal_erdos}
Let $G$ be a finite simple graph with at least three vertices. If $\kappa(G) \ge \alpha(G)$, then $G$ is Hamiltonian.
\end{theorem}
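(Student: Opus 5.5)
The statement to prove is Theorem~\ref{lem:chvatal_erdos}. It is the classical Chv\'atal--Erd\H{o}s theorem, and the paper only cites it, but here is how I would prove it directly. Let $k=\kappa(G)\ge\alpha(G)$. Since $G$ has at least three vertices and $\alpha(G)\ge1$, we get $k\ge1$. If $k=1$, then $\alpha(G)=1$ and $G$ is complete on at least three vertices, hence Hamiltonian. So assume $k\ge2$. By Dirac's lemma, any $2$-connected graph contains a cycle. Take a longest cycle $C$ in $G$, fix an orientation of it, and suppose for contradiction that $C$ is not Hamiltonian. Pick a vertex $v\notin V(C)$ and let $H$ be the component of $G-V(C)$ containing $v$.

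The first step produces many attachment points. Let $x_1,\dots,x_m$ be the vertices of $C$ that have a neighbour in $H$. If $m<k$, then deleting $\{x_1,\dots,x_m\}$ would separate $H$ from the remaining vertices of $C$. Those remaining vertices exist because $|C|\ge k+1$ (in a $k$-connected graph a longest cycle has length at least $\min(|V|,2k)$, and in any case $|V(C)|>k$ when $C$ is not Hamiltonian, since $|V|\ge k+1$). This contradicts $\kappa(G)=k$. Hence $m\ge k$.

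Next, let $x_i^+$ denote the successor of $x_i$ on $C$. I claim that $\{v,x_1^+,\dots,x_m^+\}$ is an independent set of size $m+1\ge k+1>\alpha(G)$, which is the desired contradiction. There are three things to check.
\begin{enumerate}
\item No $x_i^+$ lies in $H$ or is adjacent to $v$. Otherwise we could replace the edge $x_ix_i^+$ of $C$ by a path $x_i\to H\to x_i^+$, which has at least one interior vertex, giving a longer cycle.
\item The $x_i^+$ are distinct from the $x_j$. If $x_i^+=x_j$, we could detour $x_i\to H\to x_j$ in place of the edge $x_ix_j$, which is again a longer cycle.
\item No edge joins $x_i^+$ and $x_j^+$ for $i\ne j$. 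If $x_i^+x_j^+\in E(G)$, let $P$ be an $x_i$--$x_j$ path whose interior lies in $H$. Then the cycle
\[
x_i\,P\,x_j\;\overleftarrow{C}\;x_i^+\,x_j^+\;\overrightarrow{C}\;x_i
\]
uses every vertex of $C$ together with at least one vertex of $H$. Here $x_j\overleftarrow{C}x_i^+$ means traversing $C$ backwards from $x_j$ to $x_i^+$, and $x_j^+\overrightarrow{C}x_i$ means traversing forwards from $x_j^+$ to $x_i$. This contradicts the maximality of $C$.
\end{enumerate}

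The main obstacle is the bookkeeping in the third check. One has to verify that the two arcs $x_i^+\cdots x_j$ and $x_j^+\cdots x_i$ partition $V(C)$, so that the new cycle really is a cycle containing all of $C$. With the arcs defined as above this holds, and the argument concludes that $C$ must be Hamiltonian.
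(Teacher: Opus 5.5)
The paper does not prove this theorem; it quotes it from Chv\'atal and Erd\H{o}s. Your argument is the standard proof (take a longest cycle $C$; an outside vertex together with the successors of the attachment vertices forms an independent set of size at least $\kappa(G)+1$), and it is correct, with the harmless variation that you use the attachment set of the component $H$ as a separator instead of a Menger fan from $v$.

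The only loose step is the parenthetical claim that $C$ has vertices outside $\{x_1,\dots,x_m\}$. The reason ``since $|V|\ge k+1$'' proves nothing about $|V(C)|$. The citation of Dirac's $\min(|V|,2k)$ bound does justify the claim, but it is more than you need. Either of the following settles it:
\begin{enumerate}
\item Since $\delta(G)\ge\kappa(G)=k\ge2$, a longest cycle has length at least $k+1$.
\item Your own check (2) already shows $x_1^+\in V(C)\setminus\{x_1,\dots,x_m\}$. That check does not depend on Step 1, and $m\ge1$ because $G$ is connected.
\end{enumerate}
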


For $X, Y \subseteq \mathbb{F}_q^*$, define
\[
E(X, Y) = |\{(x, y) \in X \times Y : a_0 + xy \text{ is primitive}\}|.
\]

\begin{lemma}\label{lem:discrepancy}
For any subsets $X, Y \subseteq \F_q^*$, and any non-zero $a_0 \in \F_q^*$:
\begin{align*}\label{eq:discrepancy}
\left| E(X,Y) - \frac{\varphi(q-1)}{q-1} |X||Y| \right| \le \frac{\varphi(q-1)}{q-1}W(q-1)\sqrt{q|X||Y|}.
\end{align*}
\end{lemma}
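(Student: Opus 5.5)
We insert the indicator \eqref{eq:indicator} into the definition of $E(X,Y)$ and separate the trivial character. This gives
\[
E(X,Y)=\frac{\varphi(q-1)}{q-1}\sum_{d\mid q-1}\frac{\mu(d)}{\varphi(d)}\sum_{\operatorname{ord}\chi=d}\ \sum_{x\in X}\sum_{y\in Y}\chi(a_0+xy).
\]
The term $d=1$ is the trivial character $\chi_0$. It contributes $\frac{\varphi(q-1)}{q-1}\,|\{(x,y)\in X\times Y: a_0+xy\neq 0\}|$, and this differs from the main term by at most $\frac{\varphi(q-1)}{q-1}|\{(x,y): xy=-a_0\}|\le \frac{\varphi(q-1)}{q-1}\min(|X|,|Y|)$. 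Since $\min(|X|,|Y|)\le\sqrt{|X||Y|}\le\sqrt{q|X||Y|}$, this error is absorbed into the $d=1$ share of the bound. For the nontrivial characters, only squarefree $d$ with $\mu(d)\neq0$ contribute, and there are exactly $\varphi(d)$ characters of order $d$. So it suffices to show that for every nontrivial $\chi$ the double sum $S_\chi=\sum_{x\in X}\sum_{y\in Y}\chi(a_0+xy)$ satisfies $|S_\chi|\le\sqrt{q|X||Y|}$. Summing over $d$ then gives at most $W(q-1)$ squarefree divisors, each weighted by $\frac{1}{\varphi(d)}\cdot\varphi(d)=1$, and the $d=1$ slot holds the trivial-character error. (If that bookkeeping is too tight, we instead bound the trivial error separately and note it is dominated, since the nontrivial count is at most $W(q-1)-1$.)

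The key step is the standard bilinear-form estimate. By Cauchy--Schwarz,
\[
|S_\chi|^2\le |X|\sum_{x\in\F_q^*}\Bigl|\sum_{y\in Y}\chi(a_0+xy)\Bigr|^2=|X|\sum_{y,y'\in Y}\sum_{x\in\F_q^*}\chi(a_0+xy)\overline{\chi}(a_0+xy').
\]
For $y=y'$ the inner sum is at most $q-1$. For $y\neq y'$, substitute $t=a_0+xy$, so that $a_0+xy'=a_0+(t-a_0)y'/y=\lambda t+c$ with $\lambda=y'/y$ and $c=a_0(1-\lambda)$. Here $c\neq0$ precisely because $a_0\neq0$ and $y\ne y'$. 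The inner sum is $\sum_{t}\chi(t)\overline\chi(\lambda t+c)$, minus the single term at $x=0$, which has modulus $1$. By the change of variables $t\mapsto ct/\lambda$, the complete sum $\sum_{t\in\F_q}\chi(t)\overline\chi(\lambda t+c)$ becomes $\chi(c/\lambda)\overline\chi(c)\sum_t\chi(t)\overline\chi(t+1)$. The last sum equals $\sum_{t\ne0,-1}\chi(t/(t+1))$. The map $t\mapsto t/(t+1)$ is a bijection from $\F_q\setminus\{0,-1\}$ onto $\F_q\setminus\{0,1\}$, so this equals $-\chi(1)=-1$. Hence for $y\ne y'$ the inner sum is $-1-\chi(a_0)\overline\chi(a_0)$, of modulus at most $2$. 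This gives
\[
|S_\chi|^2\le |X|\bigl((q-1)|Y|+2|Y|(|Y|-1)\bigr).
\]

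The main obstacle is that this gives $(q-1)|Y|+2|Y|^2$ rather than $q|Y|$, which overshoots when $|Y|$ is large. I would fix it by computing the full sum over $x\in\F_q$ exactly rather than bounding it. Extending the $x$-sum to all of $\F_q$ is legitimate because we drop nonnegative terms, and it gives $\sum_{y,y'}\sum_{x\in\F_q}$. The diagonal terms give $|Y|$ sums of value at most $q-1$ (or $q$). For the off-diagonal terms, $x=0$ contributes $|\chi(a_0)|^2=1$, so each full sum is exactly $-1+1=0$. Therefore $\sum_{x\in\F_q}\bigl|\sum_{y\in Y}\chi(a_0+xy)\bigr|^2\le q|Y|$, and hence $|S_\chi|\le\sqrt{q|X||Y|}$. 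Combining this with the character count completes the bound. The only remaining care is the trivial-character boundary term, which is at most $\sqrt{|X||Y|}$ and is absorbed as described above.
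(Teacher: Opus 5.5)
\textbf{There is a genuine gap.} Your treatment of the trivial character, the Cauchy--Schwarz step and the character counting all agree with the paper. The gap is in the decisive off-diagonal computation: it is wrong, and the repair in your last paragraph rests on a false claim. For $y\neq y'$, your own change of variables gives
\[
\sum_{x\in\F_q}\chi(a_0+xy)\overline{\chi}(a_0+xy')=\chi(c/\lambda)\overline{\chi}(c)\cdot(-1)=-\overline{\chi}(\lambda)=-\chi(y/y').
\]
You then dropped the factor $\chi(c/\lambda)\overline{\chi}(c)$, which is not $1$ in general. Later you asserted that the complete sum is ``$-1+1=0$'', which does not even match your own value $-1-\chi(a_0)\overline{\chi}(a_0)=-2$ for the sum over $\F_q^*$.

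The complete off-diagonal sum never vanishes; it has modulus exactly $1$. For example, it equals $+1$ for the quadratic character when $y/y'$ is a nonsquare. If you only use that each off-diagonal term has modulus $1$, you get $(q-1)|Y|+|Y|(|Y|-1)$. That is the same overshoot you already identified, so your argument does not establish $|S_\chi|\le\sqrt{q|X||Y|}$.

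The missing idea is to sum the off-diagonal terms exactly rather than bound them one by one. We have $-\chi(y/y')=-\chi(y)\overline{\chi}(y')$, and each diagonal sum over $x\in\F_q$ equals $q-1$. Hence
\[
\sum_{y,y'\in Y}\sum_{x\in\F_q}\chi(a_0+xy)\overline{\chi}(a_0+xy')=(q-1)|Y|-\sum_{y\neq y'}\chi(y)\overline{\chi}(y')=q|Y|-\Bigl|\sum_{y\in Y}\chi(y)\Bigr|^2\le q|Y|.
\]
This is exactly the paper's step. The oscillating values $\chi(y/y')$ do no harm, because the off-diagonal part equals $|Y|-\bigl|\sum_{y\in Y}\chi(y)\bigr|^2$, which is at most $|Y|$. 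With this replacement, the rest of your proof goes through as written.
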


\begin{proof}
By definition of $E(X,Y)$ and \eqref{eq:indicator},
\[
E(X,Y) =  \frac{\varphi(q-1)}{q-1} \sum_{d \mid q-1} \frac{\mu(d)}{\varphi(d)} \sum_{\substack{\chi \in \widehat{\F}_q^* \\ \operatorname{ord}(\chi)=d}} \sum_{x \in X} \sum_{y \in Y} \chi(a_0 + xy).
\]
The principal multiplicative character term contributes
\[
\frac{\varphi(q-1)}{q-1} |X||Y|-\frac{\varphi(q-1)}{q-1} T_0,
\]
where $T_0$ is the number of solutions to $a_0 + xy = 0$ in $X \times Y$. Since $a_0xy \neq 0$,
we have $T_0 \le \min(|X|, |Y|)\leq \sqrt{|X||Y|}$.

For any non-principal multiplicative character $\chi$, the Cauchy--Schwarz inequality gives
\[
\left| \sum_{x \in X} \sum_{y \in Y} \chi(a_0 + xy) \right|^2 \le |X| \sum_{x \in X} \left| \sum_{y \in Y} \chi(a_0 + xy) \right|^2 \le |X| \sum_{y_1, y_2 \in Y} \sum_{x \in \F_q} \chi(a_0 + xy_1) \overline{\chi}(a_0 + xy_2).
\]

For $y_1 = y_2$, the inner sum is $q - 1$.
For $y_1 \neq y_2$, the inner sum is
$$
\sum_{\substack{x\in\mathbb{F}_q\\x\neq -a_0y_2^{-1}}}\chi\left(\frac{xy_1+a_0}{xy_2+a_0}\right)=\sum_{\substack{x\in\mathbb{F}_q \\x\neq -a_0y_2^{-1}}}\chi\left(\frac{y_1}{y_2}+\frac{a_0(y_2-y_1)}{y_2(xy_2+a_0)}\right)=-\chi\left(\frac{y_1}{y_2}\right).
$$
Therefore
\begin{align*}
\left| \sum_{x \in X} \sum_{y \in Y} \chi(a_0 + xy) \right|^2\le& |X|\left((q-1)|Y|-\sum_{y_1\neq y_2\in Y}\chi(y_1)\overline{\chi}(y_2)\right)\\
=&|X|\left(q|Y|-\left|\sum_{y\in Y}\chi(y)\right|^2\right)\\
\leq&q|X||Y|.
\end{align*}

Since there are exactly $\varphi(d)$ multiplicative characters of order $d$ and $W(q-1)$ squarefree divisors of $q-1$, combining the above, we obtain
\begin{align*}
    \left| E(X,Y) - \frac{\varphi(q-1)}{q-1} |X||Y| \right|\le&\frac{\varphi(q-1)}{q-1}\sqrt{|X||Y|}+\frac{\varphi(q-1)}{q-1}(W(q-1)-1)\sqrt{q|X||Y|}\\
    \le& \frac{\varphi(q-1)}{q-1}W(q-1)\sqrt{q|X||Y|}.
\end{align*}
\end{proof}

\section{The case of $a_0\neq0$}
Throughout this section, we assume $a_0\neq0$. For simplicity, we write $G=G_{q,a_0}$.

\begin{lemma}\label{thm:alpha} The independence number of $G$ satisfies
\begin{equation*}\label{eq:alpha_bound}
\alpha(G) \le 2W(q-1)\sqrt{q}+1.
\end{equation*}
\end{lemma}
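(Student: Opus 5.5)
Let $I$ be an independent set of $G$ with $|I|=\alpha(G)$; we apply Lemma~\ref{lem:discrepancy} with $X=Y=I$. The only subtlety is that independence in $G$ says nothing about pairs $(x,x)$. Edges require $x\neq y$, so the diagonal pairs with $a_0+x^2$ primitive can lie inside $I$. Independence therefore gives $E(I,I)\le |I|$: each ordered pair $(x,y)\in I\times I$ with $x\ne y$ contributes nothing, and the diagonal contributes at most $|I|$.

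On the other hand, Lemma~\ref{lem:discrepancy} gives the lower bound
\[
E(I,I)\ \ge\ \frac{\varphi(q-1)}{q-1}\Bigl(|I|^2 - W(q-1)\sqrt{q}\,|I|\Bigr).
\]
Combining the two bounds and dividing by $|I|>0$ yields
\[
\frac{\varphi(q-1)}{q-1}\bigl(|I|-W(q-1)\sqrt q\bigr)\le 1 .
\]
This directly gives $|I|\le W(q-1)\sqrt q+\frac{q-1}{\varphi(q-1)}$. That is not yet the stated bound, since $\frac{q-1}{\varphi(q-1)}$ can be of size about $\log\log q$ rather than constant.

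The main obstacle is to absorb the diagonal term more carefully, so that the factor $\frac{q-1}{\varphi(q-1)}$ does not appear. I would use the standard bound $\frac{q-1}{\varphi(q-1)}=\prod_{p\mid q-1}\frac{p}{p-1}\le \prod_{p\mid q-1} 2 = W(q-1)$, together with $W(q-1)\le W(q-1)\sqrt q$. This gives
\[
|I|\le W(q-1)\sqrt q+W(q-1)\le 2W(q-1)\sqrt q,
\]
which is stronger than the claimed $\alpha(G)\le 2W(q-1)\sqrt q+1$. The $+1$ leaves slack, for instance if the paper removes a special vertex or avoids the degenerate case $|I|=0$.

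Alternatively, I would remove the diagonal from the count: pass to $E(I,I)-\#\{x\in I: a_0+x^2 \text{ primitive}\}=0$ and bound that count by $|I|$. The same inequality then results. So the key steps are: apply the discrepancy lemma to $X=Y=I$, use $E(I,I)\le|I|$, and bound the ratio $(q-1)/\varphi(q-1)$ by $W(q-1)$.
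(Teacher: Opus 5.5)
Your proof is correct, but it uses a different decomposition from the paper's.

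The paper splits a maximum independent set $I$ into disjoint parts $X,Y$ of sizes $\lfloor s/2\rfloor$ and $\lceil s/2\rceil$. Then $E(X,Y)=0$ exactly and no diagonal pairs occur. Lemma~\ref{lem:discrepancy} gives $\sqrt{\lfloor s/2\rfloor\lceil s/2\rceil}\le W(q-1)\sqrt q$. The $+1$ in the statement is just the loss when $s$ is odd, not a removed vertex or the case $|I|=0$.

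You instead take $X=Y=I$, which Lemma~\ref{lem:discrepancy} allows since it is stated for arbitrary subsets. You pay for the diagonal via $E(I,I)\le |I|$. Because you keep the full product $|I|^2$ rather than roughly $|I|^2/4$, your intermediate bound $\alpha(G)\le W(q-1)\sqrt q+\frac{q-1}{\varphi(q-1)}\le W(q-1)\sqrt q+W(q-1)$ is about half of the paper's. The step $\frac{q-1}{\varphi(q-1)}=\prod_{\ell\mid q-1}\frac{\ell}{\ell-1}\le W(q-1)$, which you call the main obstacle, is routine.

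The paper's route is marginally simpler, since it needs no diagonal bookkeeping, but yours is genuinely sharper. You throw that gain away by weakening to $2W(q-1)\sqrt q$. Fed into Theorem~\ref{lem:chvatal_erdos} together with Lemma~\ref{thm:kappa}, your sharper bound would replace \eqref{phi-lower-bound} by the weaker requirement $\varphi(q-1)\ge 2W(q-1)\sqrt q+1+\frac{q-1}{\varphi(q-1)}$. That should permit a noticeably smaller threshold than $18\,888\,871$.
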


\begin{proof}
Let $I \subseteq\F_q^*$ be an independent set of maximum size $s = \alpha(G)$. We partition $I$ into two disjoint subsets $X$ and $Y$ with
\[
|X| = \left\lfloor \frac{s}{2} \right\rfloor \quad \text{and} \quad |Y| = \left\lceil \frac{s}{2} \right\rceil.
\]
Since $I$ is independent, there are no edges between $X$ and $Y$, so $E(X,Y) = 0$. Applying Lemma~\ref{lem:discrepancy}, we have
\[
 \frac{\varphi(q-1)}{q-1} |X||Y|  \le \frac{\varphi(q-1)}{q-1}W(q-1)\sqrt{q|X||Y|}.
\]
Hence
$$
\sqrt{\left\lfloor \frac{s}{2} \right\rfloor \left\lceil \frac{s}{2} \right\rceil}\le W(q-1)\sqrt{q}.
$$
If $s$ is even, we have
$$
s\leq 2W(q-1)\sqrt{q}.
$$
If $s$ is odd, we have
$$
s\leq\sqrt{s^2-1}+1\leq 2W(q-1)\sqrt{q}+1
$$
This completes the proof.
\end{proof}

\begin{lemma}\label{thm:kappa}
The vertex connectivity of $G$ satisfies
\begin{equation*}\label{eq:kappa_bound}
\kappa(G) \ge \varphi(q-1) -W(q-1)\sqrt{q}-1.
\end{equation*}
\end{lemma}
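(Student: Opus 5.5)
The plan is to combine a minimum-degree estimate with Lemma~\ref{lem:discrepancy} applied to the two sides of a vertex cut. First I will bound $\delta(G)$ from below. Fix $x\in\F_q^*$. As $y$ runs over $\F_q^*$, the element $a_0+xy$ runs bijectively over $\F_q\setminus\{a_0\}$. Hence the number of $y\in\F_q^*$ with $a_0+xy$ primitive is $\varphi(q-1)$, minus $1$ if $a_0$ itself is primitive. Discarding the possible loop $y=x$ removes at most one more. So every vertex has degree at least $\varphi(q-1)-2$, i.e.\ $\delta(G)\ge \varphi(q-1)-2$.

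Next, if $G$ is complete then $\kappa(G)=q-2\ge\varphi(q-1)-1$ and there is nothing to prove. Otherwise let $S$ be a minimum vertex cut, with $k=|S|=\kappa(G)$. Then $V(G)\setminus S$ splits into two nonempty sets $A$ and $B$ with no edges between them: take $A$ to be one component and $B$ the union of the others. Since $a_0+xy$ is symmetric in $x$ and $y$, and $A\cap B=\emptyset$ (so the condition $x\neq y$ is automatic), this gives $E(A,B)=0$.

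I will then lower-bound $|A|$ and $|B|$ using the degree bound. For a vertex $x\in A$, all neighbours of $x$ lie in $(A\setminus\{x\})\cup S$, so
\[
|A|-1+k\ge \varphi(q-1)-2, \qquad\text{i.e.}\qquad |A|\ge \varphi(q-1)-1-k .
\]
The same argument gives $|B|\ge\varphi(q-1)-1-k$.

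Finally, suppose for contradiction that $k<\varphi(q-1)-W(q-1)\sqrt q-1$. Then both $|A|$ and $|B|$ exceed $W(q-1)\sqrt q$, so $|A||B|>W(q-1)^2q$. On the other hand, Lemma~\ref{lem:discrepancy} with $X=A$, $Y=B$ and $E(A,B)=0$ gives, exactly as in the proof of Lemma~\ref{thm:alpha},
\[
\sqrt{|A||B|}\le W(q-1)\sqrt q .
\]
This is a contradiction, which proves the claimed lower bound on $\kappa(G)$.

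The only delicate point is the degree count. I must make sure that the possible exclusion of $a_0$ (when $a_0$ is primitive) and the possible loop $x=y$ cost at most $2$ in total, so that the constant $-1$ in the statement comes out correctly. The rest is a direct application of Lemma~\ref{lem:discrepancy}.
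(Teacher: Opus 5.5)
Your proof is correct and is essentially the paper's argument. It uses the same bound $\delta(G)\ge\varphi(q-1)-2$, the same inequality $|A|-1+\kappa(G)\ge\delta(G)$ for a side $A$ of the cut, and Lemma~\ref{lem:discrepancy} on the two sides with $E(A,B)=0$. The only difference is cosmetic: the paper takes $A$ to be a smallest component so that $|A|\le\sqrt{|A||B|}\le W(q-1)\sqrt q$ directly, whereas you bound both $|A|$ and $|B|$ from below and argue by contradiction, and you also handle the complete-graph case that the paper leaves implicit.
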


\begin{proof}
Let $S \subseteq V(G) = \F_q^*$ be a minimum vertex cut of $G$, that is, $|S| = \kappa(G)$. Given any vertex $v \in \F_q^*$, $a_0 + vy$ ranges over $\F_q\setminus\{a_0\}$
as $y$ ranges over $\F_q^*$. Since $a_0$ and $a_0+v^2$ may be primitive, we have
\begin{equation}\label{eq:delta_lower}
\delta(G) \ge \varphi(q-1) - 2.
\end{equation}

Let $A$ be a smallest connected component of $G - S$, and let $B = V(G) \setminus (A \cup S)$. For any $a\in A$, as there are no edges between $A$ and $B$, all its neighbors in $G$ lie in $A \cup S$ with at most $|A| - 1$ neighbors inside $A$. Therefore, $a$ must have at least $\deg(a) - (|A| - 1)$ neighbors in $S$ where $\deg(a)$ is the degree of $a$. Thus
\begin{equation}\label{eq:S_degree_bound}
\kappa(G) \ge \delta(G) - (|A| - 1) = \delta(G) - |A| + 1.
\end{equation}

In addition, applying Lemma~\ref{lem:discrepancy} to $X = A$ and $Y = B$ with $E(A, B) = 0$, we obtain that
\[
\frac{\varphi(q-1)}{q-1} |A||B| \le \frac{\varphi(q-1)}{q-1} W(q-1)\sqrt{q|A||B|}.
\]
Hence
$$
|A|\leq\sqrt{|A||B|}\leq W(q-1)\sqrt{q}.
$$
This, together with equations \eqref{eq:delta_lower} and \eqref{eq:S_degree_bound}, gives the desired lower bound for $\kappa(G)$.
\end{proof}

Inspired by the approach of Cohen, Oliveira e Silva, and Trudgian \cite{cohen2015blms}, we divide our proof into three cases.
\begin{proof}[Proof of Theorem~\ref{thm:main} when $a_0\neq0$]
By Theorem \ref{lem:chvatal_erdos}, Lemmas \ref{thm:alpha} and \ref{thm:kappa}, it suffices to show that
\begin{equation}\label{phi-lower-bound}
\varphi(q-1)\ge 3W(q-1)\sqrt q+2
\end{equation}
for any prime power $q> 18\,888\,871$.
Put $n=q-1$ and let $r=\omega(n)$. Write $$n = \prod_{j=1}^r \ell_j^{a_j},$$
where $\ell_1 < \dots < \ell_r$ are primes and $a_j \ge 1$.
We define $P_r=p_1p_2\ldots p_r$, where $p_j$ denotes the $j$-th prime.
Since
$$\frac{\varphi(n)}{n} = \prod_{j=1}^r\left(1 - \frac{1}{\ell_j}\right),$$
it increases as any prime divisor $\ell_j$ increases. Hence,
\[
\frac{\varphi(n)}{n}\ge\alpha_r:=\frac{\varphi(P_r)}{P_r}.
\]
Since
$$\frac{\varphi(n)}{\sqrt{n}} = \prod_{j=1}^r \ell_j^{(a_j-1)/2} \cdot \dfrac{\ell_j - 1}{\sqrt{\ell_j}},$$
 by the monotonicity of $(x-1)/\sqrt{x}$, we have
\[
\frac{\varphi(n)}{\sqrt{n}}\ge\beta_r:=\frac{\varphi(P_r)}{\sqrt{P_r}}.
\]



\noindent\textbf{Case 1: $r\ge9$.} Note that for every $k\ge10$,
\[
\frac{p_k-1}{\sqrt{p_k}}>2.
\]
Thus we obtain
\[
\frac{\beta_r}{2^r}
=
\frac{\beta_9}{2^r}\prod_{k=10}^r
\frac{p_k-1}{\sqrt{p_k}}
\ge
\beta_9\,2^{-9}>4.772.
\]
Since $n\ge 18\,888\,871$,
\begin{align*}
3\sqrt{1+\frac1n}
+\frac2{2^r\sqrt n}
<
3.001.
\end{align*}
Thus
$$
\frac{\varphi(n)}{\sqrt{n}}\ge \beta_r>3\cdot 2^r\sqrt{1+\frac1n}+\frac2{\sqrt n}
$$
and \eqref{phi-lower-bound} follows.

\medskip

\noindent\textbf{Case 2: \(r=8\).} Suppose that $n$ is squarefree. If \(\ell_8\ge43\), then
\[
n\ge
2\cdot3\cdot5\cdot7\cdot11\cdot13\cdot17\cdot43
=21\,951\,930
\]
and
\[
\frac{\varphi(n)}{\sqrt n}
\ge
\alpha_8\sqrt n\ge\frac{1658880}{9699690}\sqrt{21\,951\,930}>801.29.
\]
For fixed \(\ell_8<43\), we check all possible choices of 
\(\ell_1,\dots,\ell_7\) with
\[
n=\ell_1\cdots \ell_8>18\,888\,870.
\]
and \(n+1\) a prime power. Among all squarefree $n>18\,888\,870$, the minimum of $\varphi(n)/\sqrt{n}$ is at
\[
(\ell_1,\dots,\ell_8)=(2,3,5,7,11,13,23,29),
\]
for which $n=20\,030\,010,$ and $$\frac{\varphi(n)}{\sqrt{n}}>792.70.$$

Suppose  \(n\) is not squarefree. The least non-squarefree \(n>18\,888\,870\) with \(n+1\) a prime power is
\[
n=2^2\cdot3\cdot5\cdot7\cdot11\cdot13\cdot17\cdot31
=31\,651\,620.
\]
Therefore
\[
\frac{\varphi(n)}{\sqrt n}
\ge
\alpha_8\sqrt n
>962.01.
\]
In summary, we have
\[
\frac{\varphi(n)}{\sqrt n}
>792.70.
\]
Moreover,
$$
3\cdot 2^8\sqrt{1+\frac1n}+\frac2{\sqrt n}<768.01
$$
for $n>18\,888\,870$. Thus \eqref{phi-lower-bound} holds.

\medskip
\noindent\textbf{Case 3: \(1\le r\le7\).} Since
$$\alpha_r\ge\alpha_7=\frac{3072}{17017},$$
and $n>18\,888\,870$, we have
\[
\frac{\varphi(n)}{\sqrt n}
\ge \alpha_7\sqrt n
>784.56>384.01>3\cdot 2^r\sqrt{1+\frac1n}+\frac2{\sqrt n}.
\]
So \eqref{phi-lower-bound} holds.

\medskip
Combining Cases 1, 2 and 3, $G$ is Hamiltonian when $q> 18\,888\,871$.
\end{proof}

\section{The case of $a_0=0$}

Our construction for odd prime powers $q$ is inspired by Sinha, Garg and Singh \cite{sinha2011}.

\begin{proof}[Proof of Theorem~\ref{thm:main} when $a_0=0$]
Let $g$ be a primitive element of $\mathbb{F}_q$ and let $n=q-1$. Any $a_i \in \mathbb{F}_q^*$ can be represented uniquely by its exponent with respect to $g$, that is,
\begin{equation*}
a_i = g^{x_i} \quad \text{for some unique } x_i \in \{0,1,\ldots,n-1\}.
\end{equation*}
The adjacent product $a_i a_{i+1} = g^{x_i + x_{i+1}}$ is a primitive element of $\mathbb{F}_q$ if and only if
\begin{equation}
\gcd(x_i + x_{i+1}, \, n) = 1.\nonumber
\end{equation}
We construct the Hamilton cycle by considering the parity of $n$ separately.

If $n$ is even, consider the sequence
\begin{equation*}
(x_1,x_2,\ldots,x_n,x_1)=(0, \, n-1, \, 2, \, n-3, \, 4, \, n-5, \, \dots, \, n-2, \, 1, \, 0).
\end{equation*}
It yields a Hamilton cycle in $G_{q,0}$.

If $n$ is odd, write $n=2h+1$, where $h \geq 2$. When $h=2$, we have $q=6$, which is not a prime power.
For $h=3$, we have
$$
(x_1,x_2,\ldots,x_7,x_1)=(6,0,5,1,3,2,4,6).
$$
and every adjacent sum is coprime to $7$. For $h \geq 4,$ consider the sequence
$$
(x_1,x_2,\ldots,x_n,x_1)=(n-1,0,n-2,1,h,h-1,h+1,h-2,h+2,\ldots,2,n-3,n-1).
$$
In this sequence, every adjacent sum is congruent to $n-1$, $n-2$, $n-4$ or $h+1$ modulo $n$. Since
\begin{equation*}
\gcd(n-1, n) = \gcd(n-2, n) =\gcd(n-4, n) = \gcd(h+1,n) = 1,
\end{equation*}
this leads to a Hamilton cycle in $G_{q,0}$. This completes the proof.
\end{proof}

\end{document}